\newcommand{\XO}{{\Omega}}
\renewcommand{\bu}{u}
\title[Comment on Rochet and Chone's Square Screening Example$^*$]{Comment on ``Ironing, sweeping and multidimensional screening''}
\thanks{$^*$ Robert McCann's research is supported in part by the Canada Research Chairs program CRC-2020-00289, the Simons Foundation, and Natural Sciences and Engineering Research Council of Canada Discovery Grants RGPIN--2015--04383 and 2020--04162. The work of Kelvin Shuangjian Zhang is supported by the ERC project NORIA. The authors are grateful to  
Jean-Marie Mirebeau for permission to reproduce one of his figures, 
and to Ivar Ekeland 
and participants on our virtual presentation of these results at the Moscow Seminar on Mathematical Problems in Economics in May 2021 for useful feedback,  and to Toronto's Fields Institute for the Mathematical Sciences, where much of this work was performed. 
	\copyright \today}
\author[1]{Robert J. McCann}
\address{Department of Mathematics, University of Toronto, Ontario, Canada, M5S 2E4 {\tt mccann@math.toronto.edu}}
\author[2]{Kelvin Shuangjian Zhang}
\address{	School of Mathematical Sciences,
	Fudan University, Shanghai,
	CHINA 200433
 {\tt ksjzhang@fudan.edu.cn}}
\begin{document}

\begin{abstract}
In their study of price discrimination for a monopolist selling heterogeneous products to consumers having
private information about their own multidimensional types,  \cite{RochetChone98} discovered a new form of screening
in which consumers with intermediate types are bunched together into isochoice groups of various dimensions incentivized to purchase the same product.
They analyzed a particular example involving customer types distributed uniformly over the unit square. 
For this example, we prove that their proposed solution {is not selfconsistent, and we indicate how consistency can be restored.}
\end{abstract}

\maketitle
\vspace{-0.5cm}
\noindent \textbf{Keywords.} {\small \textit{Principal-Agent problem, Rochet-Chon\'e, asymmetric information, adverse selection, monopolist nonlinear pricing, multidimensional screening, bilevel optimization, free boundary, bunching}}
\vspace{0.5cm}

\section{Introduction}\label{section:introduction}

Let potential consumers be parameterized by types $x \in \R^n$ and products by types $y \in [0,\infty)^n$,  with $y=(0,\dots,0)$ representing the null product or outside option.
Taking $b(x,y)=x \cdot y$ to be the direct utility of product $y$ to agent $x$,  \cite{RochetChone98} study the price menu $v(y)$ a monopolist will
select to maximize her profits when the price $v(0,\ldots,0)=0$ of the outside option is constrained,
assuming the distribution $d\mu(x)$ of agents and the monopolist's cost $c(y)$ to produce each product $y$   
are both known.  They show $v$ can be taken to be the convex dual function of the consumers' indirect utility $u(x)$,  
which in turn maximizes the profit functional
\begin{flalign}\label{profit}
\Phi[u] : = \int_{\R^n} \left[ x \cdot Du(x) - u(x)  - c(Du(x)) \right]  d\mu(x)
\end{flalign}
among non-negative convex functions $u:\R^n\longrightarrow [0,\infty]$ which are coordinatewise nondecreasing.
The product $y=Du(x)$ selected by consumer type $x$ coincides with the gradient of $u$.
They give an abstract characterization of the maximizing $u$,  and explore its implications for the particular example in which
$c(y)=|y|^2/2$, $n=2$ and $\mu$ is distributed uniformly over the square $\XO:=[a,a+1]^2$ for fixed $a>0$.
{Although their abstract characterization is no doubt correct, we demonstrate an inconsistency in their subsequent analysis
of the square example; we show it is possible to restore consistency by modifying their solution to accommodate an overlooked market segment. In this example, 
Rochet and Chon\'e} 
assert the unique maximizer $u \in C^1(\XO)$ and divides $\XO$ into three regions 
\begin{figure}[h]
	\begin{tikzpicture}[thick, scale=3]
		\draw[domain=0:1] plot (0, \x); 
		\draw[domain=0:1] plot (\x, 1); 
		\draw[domain=0:1] plot (1,\x); 
		\draw[domain=0:1] plot (\x, 0); 
		\draw[domain=0:0.8165] plot (\x,0.8165-\x); 
		\draw[domain=0:0.5] plot (\x,0.5-\x);
		
		\node at (0.13, 0.13) {$\Omega_0$};
		\node at (0.33, 0.34) {$\Omega_1$};
		\node at (0.7, 0.7) {$\Omega_2$};
	\end{tikzpicture}
	\caption{Partition of $\XO$ (according to the rank of $D^2\bu$) given in \cite{RochetChone98}.}
	\label{fig:1}
\end{figure}
\begin{align}
\label{X_0}
\Omega_0 &=\{ (x_1,x_2) \in \XO : x_1 + x_2 \le  t_{0.5}\} 
\\ \label{X_1}
\Omega_1&= \{ (x_1,x_2) \in \XO :  t_{0.5}<x_1 + x_2 \le  t_{1.5}\}
\\ \label{X_2}
\Omega_2
&= \{ (x_1,x_2) \in \XO : t_{1.5}< x_1 + x_2 \}
\end{align}
of qualitatively different behaviour: a  triangle $\Omega_0$ of excluded customers on which $u(x)=0$;  a strip $\Omega_1$ foliated by lines $x_1+x_2=t$ of customers each of whom
chooses a product $y=(U'(t),U'(t))$ from the diagonal,  where $u(x_1,x_2) = u_1(x_1,x_2)=U(x_1+x_2)$ satisfies
\begin{equation}\label{RCODE}
U(t) = \frac38 t^2 - \frac12 at - \frac12 \log |t-2a| + C_0,
\end{equation}
with matching conditions $U(t_{0.5})=0=U'(t_{0.5})$ selecting the constants $C_0$ and $t_{0.5}$;
and a third region $\Omega_2$ on which $u=u_2$ is strictly convex (so that each agent gets a customized product) and satisfies the mixed Dirichlet / Neumann problem for the Poisson equation 
\begin{flalign}\label{mixedBVP}
	\begin{cases}
		\Delta u_2 := (\frac{\p^2 u_2}{\p x_1^2} + \frac{\p^2 u_2}{\p x_2^2}) = 3, & \text{ on } \Int(\Omega_2),\\
		(Du_2(x)-x)\cdot \hat{n}(x) = 0, & \text{ on } \p \Omega_2 \cap \p \XO,\\
		u_2 - u_1 = 0,  & \text{ on } \p \Omega_1 \cap \p \Omega_2; \\
		\end{cases}
\end{flalign}
here $\hat n(x)$ denotes the outer unit normal to the interior $\Int(\Omega_2)$ of the domain at $x \in \p \Omega_2$,  and the additional boundary condition
\begin{flalign}\label{extra}
		 D(u_2-u_1) \cdot \hat{n}(x) =0, & \text{ on } \p \Omega_1 \cap \p \Omega_2
\end{flalign}
is supposed to select and be satisfied by some constant $t_{1.5} \in \R$.

Subsequent numerics by \cite{EkelandMoreno-Bromberg10} and \cite{Mirebeau16} suggest this description is
mostly but not entirely correct: in Figure \ref{fig:mirebeau}, the region $\Omega_1$ appears not to be a strip, but to have a more complicated upper boundary, parameterized by
a nonsmooth curve $t_{1.5}(\,\cdot\,)$ over the anti-diagonal:
\begin{equation}\label{revised}
\Omega_1= \{ (x_1,x_2) \in \XO :  t_{0.5}<x_1 + x_2 \le  {t_{1.5}(x_1-x_2)}\}.
\end{equation}
Below, we prove rigorously that \cite{RochetChone98}'s ansatz $t_{1.5}\equiv const$ cannot be correct.
Before doing so,  we explain how to correct it and make it consistent with the theoretical and numerical evidence:  
assuming temporarily that we know $\Omega_1$ (and hence $\Omega_2$),
we first augment Rochet and Chon\'e's description of $u=u_1$ in $\Omega_1$; 
we claim that $(\Omega_1,u_2)$ solves the boundary value problem \eqref{mixedBVP}--\eqref{revised}.
In \cite{McCannZhang23+}, we give a nonrigorous justification of this claim,  along with the rigorous proof that only one such pair $(\Omega_1,u_2)$ solving \eqref{mixedBVP}--\eqref{revised} can yield $u$ convex throughout $\XO$.  Setting aside the degree of rigor of the justification, this gives a unique characterization of the solution. Since finding the edge $\p \Omega_1\cap \p \Omega_2$ of the unknown domain is half of the challenge,  this is called a free boundary problem in the mathematical literature.

\begin{figure}[h]
	\includegraphics{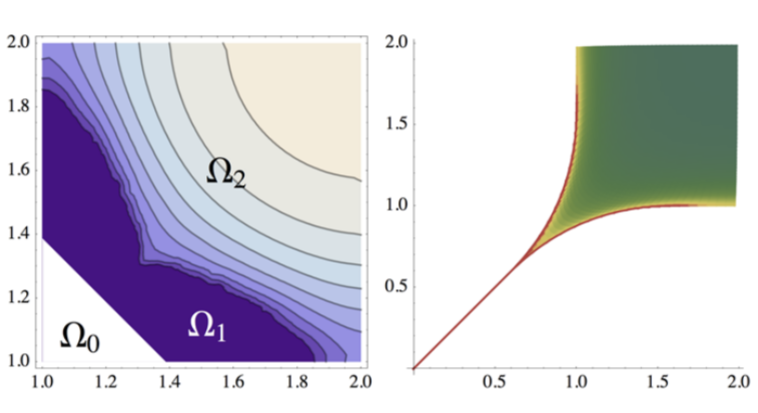}
	\caption{Numerics from \cite{Mirebeau16}. Left: level sets of $\det D^2\bu$ with $\bu = 0$ on $\Omega_{0}$ and $\det D^2 \bu = 0$ on $\Omega_{0} \cup \Omega_{1}$; Right: intensity of products sold by the monopolist.}
	\label{fig:mirebeau}
\end{figure}

We begin with the ansatz that $\Omega_1 = \Omega_1^0 \cup \Omega_1^+ \cap \Omega_1^-$ splits into three regions:  a strip
\begin{align}\label{Omega_1^0}
\Omega_1^0 
&:= \{ (x_1,x_2) \in \Omega_1 : x_1 + x_2 \in (t_{0.5},t_{1.0}]\},
\end{align}
plus two regions
\begin{align}
\label{Omega_1^pm}
\Omega_1^\pm &:= \{(x_1,x_2) \in \Omega_1 \setminus \Omega_1^0  : \pm (x_1-x_2) \ge 0 \},
\end{align}
below and above the diagonal. The region $\Omega_1^0$ is foliated by anti-diagonal isochoice sets, and the solution there $u(x_1,x_2)=U(x_1+x_2)$ is exactly as Rochet and Chon\'e describe \eqref{RCODE}.  However,  the region $\Omega_1^-$
and its reflection $\Omega_1^+$ below the diagonal are foliated by isochoice segments making continuously varying angles $\theta$ with the horizontal.  

\begin{figure}[h]
	\begin{tikzpicture}[thick, scale=7]
		\coordinate (a) at (0,0.71);
		\coordinate (b) at (0.03,0.68);
		\coordinate (c) at (0.05,0.71);

		\draw[domain=0:1] plot (0, \x) ;
		\draw[domain=0:1] plot (\x, 1) ;
		\draw[domain=0:1] plot (1,\x) ;
		\draw[domain=0:1] plot (\x, 0) ;
		\draw[domain=0:0.34] plot (\x,  -1.2+2*sqrt{1-\x*\x*4}) ;
		\draw[domain=0:0.34] plot (-1.2+2*sqrt{1-\x*\x*4}, \x);
		\draw[domain=0:0.3] plot (\x, 0.3-\x);
		\draw[domain = 0:0.68] plot (\x, 0.68-\x);
		\draw[domain=0:0.05, green] plot (\x, 0.71);
		\draw pic[draw,angle radius=7, "{\fontsize{3pt}{3.6pt}\selectfont  $\theta$}" shift={(0.2,-0.065)}, green] {angle= b--a--c};
		\draw[dashed, domain=0:0.3, blue] plot(\x, 0.71-0.9*\x);
		
		\node at (0.09, 0.09) {$\Omega_0$};
		\node at (0.23, 0.28) { $\Omega_{1}^{0}$};
		\node at (0.045, 0.75) {$\Omega_{1}^{-}$};
		\node at (0.65, 0.13) { $\Omega_{1}^{+}$};
		\node at (0.6, 0.6) {$\Omega_{2}$};
		\node at (-0.112, 0.3) {\tiny $\left(a, t_{0.5}-a\right)$};
		\node at (-0.112, 0.67) {\tiny $\left(a, t_{1.0}-a\right)$};
		\node at (-0.082, 0.715) {\tiny \color{blue}$\left(a, h(\theta)\right)$};
		\node at (-0.065, 0.8) {\tiny $\left(a,\bar x_2\right)$};
	\end{tikzpicture}
	\caption{Revised partition of the space $\Omega$ of agent types, and coordinates in $\Omega_1^-$.}
	\label{fig:2}
\end{figure}

We describe the solution $u=u_1^-$ in this region using an Euler-Lagrange equation derived in
 \cite{McCannZhang23+}. 
 Index each isochoice segment in $\Omega_1^-$ by its angle $\theta\in (-\frac{\pi}{4}, 
 \frac{\pi}{2}]$. 
Let $(a,{h(\theta)})$ denote its left-hand endpoint and parameterize the segment by 
distance ${  r} \in [0,R(\theta)]$ to this boundary point $(a,h(\theta))$. 
Along the hypothesized length $R(\theta)$ of this segment assume $u$ 
increases linearly with slope $m(\theta)$ and offset $b(\theta)$:
\begin{equation}\label{D:m,b}
u_1^-\Big((a,h(\theta)) + { r} (\cos \theta,  \sin \theta)\Big) = {m(\theta)}{  r} + {b(\theta)}.
\end{equation}

Given a constant $t_{1.0} \in [2a,2a+1]$ and  ${R}:
\left[-\frac\pi4,{\frac \pi2}\right] \to \left[0, 
\sqrt{2}\right)$ 
locally Lipschitz {on its interval of positivity,} with 
$R\left(-\frac\pi4 \right) = 
(t_{1.0}-2a)/\sqrt 2,$ solve 

\begin{equation}\label{slope BC}
\textstyle  
m( -\frac \pi 4) = 0, \qquad m'(-\frac \pi 4) = \textstyle {\sqrt{2} U' \left(t_{1.0} \right)} \qquad  \mbox{\rm such that}
\end{equation}
\begin{equation}\label{slope E-L}
({m''(\theta) + m(\theta)} - {2R(\theta)})({m'(\theta)} \sin \theta - {m(\theta)} \cos \theta +a) = \frac32 {R^2(\theta) \cos\theta}.
\end{equation}
Then set
\begin{eqnarray}
\label{D:h}
{h(\theta)} &=& (t_{1.0}-a) + \frac13 \int_{-\pi/4}^{\theta} (m''(\vartheta) + m(\vartheta) - 2{  R(\vartheta)}) \frac{d\vartheta}{\cos \vartheta},
\\ 
{b(\theta)} &=&  
U(t_{1.0})+ \int_{-\pi/4}^{\theta} (m'(\vartheta) \cos \vartheta + m(\vartheta) \sin \vartheta) h'(\vartheta) d\vartheta.
\label{offset E-L}
\end{eqnarray}
		
	Given $t_{1.0}$ and $R(\cdot)$ {as above},  
	the triple $(m, b, h)$ satisfying 
	\eqref{slope E-L}--\eqref{offset E-L} exists and is unique {on the interval where $R(\cdot)>0$. Thus} 
	the shape of $\Omega_{1}^{-}$ --- or equivalently $t_{1.5}(\cdot)$ from \eqref{revised} --- 	
	and the value of $u_1^-$ on it will be uniquely determined by $t_{1.0} \in [2a,2a+1]$ and $R:\left(-\frac\pi4,\frac\pi2\right] \to \left[0, \sqrt{2}\right)$.  We henceforth restrict our attention to choices of $t_{1.0}$ and $R(\cdot)$ for which the resulting set $\Omega_1^-$ 
	{lies above the diagonal and 
	in} the square $\XO$.  
	In this case $\Omega_{1}^{+}$ and the value of $u = u_1^+$ on $\Omega_1^+$ are determined 
	by reflection symmetry $x_1 \leftrightarrow x_2$ across the diagonal.
	Together, $u_1^\pm$ and \eqref{RCODE} define $u=u_1$ on $\Omega_1$ and provide the boundary data on 
	$\p \Omega_1 \cap \p \Omega_2$ needed for the boundary value problem \eqref{mixedBVP} which determines $u_2$.
	Finally, as claimed above,  for only one choice of $t_{1.0}$ and $R(\cdot)$ can $u$ (pieced together from 
	$u_0,u_1$ and $u_2$) be convex and satisfy the extra boundary condition \eqref{extra}; 
	when it exists (and we argue conditionally in \cite{McCannZhang23+} that it does) 
	this choice uniquely solves \cite{RochetChone98}'s square model.
	
	The solution we propose also appears consistent with phenomena observed numerically and discussed in an
investment-to-match taxation model proposed by \cite{BoermaTsyvinskiZimin22+} simultaneously and independently of the present work.  In their terminology  $\Omega_1$ decomposes into
a blunt bunching region $\Omega_1^0$ in which the optimal product selected does not differentiate between buyers according to
the sign $x_1-x_2$ distinguishing their dominant trait, as opposed to the targeted bunching regions $\Omega_1^\pm$ in which 
the product selected sorts along the dimension of their dominant trait and bunches in the other dimension.
In our case, the two regions can also be distinguished by the fact that the indirect utility $u(x)$ is constant on each bunch in the 
blunt bunching region $\Omega_1^0$,  whereas it varies along generic bunches
in the targeted bunching regions {$\Omega_1^\pm$}.
The latter are responsible
for the anomalously high consumption of products along the red part of the boundary lining the 
yellow-green customization region in Figure~2.

As shown in Figure \ref{fig:1}, \cite{RochetChone98} hypothesized that the regions 
\eqref{X_0}--\eqref{X_2} 
are separated by two segments parallel to the anti-diagonal, so 
$\Omega_1 = \{(x_1,x_2) \in \XO : t_{0.5} <x_1 + x_2  \le  t_{1.5}\}$ 
with $t_{0.5} = \frac{4a+\sqrt{4a^2+6}}{3}$ and $t_{1.5} = 2a + \frac{\sqrt{6}}{3}$. 
 Thus, they do not consider the possibility of a non-empty subset $\Omega_1^\pm \subset \Omega_1$ where 
$\bu(x)$ does not just depend on $x_1+x_2$ (nor do they consider any system of equations
comparable to \eqref{D:m,b}--\eqref{offset E-L}).
Apart from that, their proposed solution is identical to ours, except that they fail to take into account that enforcing both the Dirichlet and Neumann conditions \eqref{mixedBVP}--\eqref{extra} on a line separating $\Omega_1$ from $\Omega_2$
overdetermines the problem and prevents the free interface from being a line segment.  
As a result, we now show their proposed solution to be inconsistent with the continuous differentiability $\bu \in C^1(\XO)$ 
up to the boundary claimed by \cite{RochetChone98}, 
and also by \cite{CarlierLachandRobert01}.

\begin{lemma} 
\label{L:overlooked}
If $u:\XO \longrightarrow [0,\infty)$ convex nondecreasing satisfies \eqref{X_0} --\eqref{extra} 
(so that $\Omega_1^\pm$ are empty),  then $u \not\in C^1(\XO)$ hence cannot maximize \eqref{profit} for $c(y)=|y|^2/2$ and $d\mu(x) = 1_{\XO}(x) dx$.  
 \end{lemma}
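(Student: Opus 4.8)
I would argue by contradiction. Suppose $u \in C^1(\XO)$ is convex nondecreasing and satisfies \eqref{X_0}--\eqref{extra}. Then $u \equiv 0$ on $\Omega_0$, $u = U(x_1+x_2)$ on $\Omega_1$ with $U$ as in \eqref{RCODE}, and $u = u_2$ on $\Omega_2$ with $u_2$ solving \eqref{mixedBVP}. The interface $\Gamma := \p\Omega_1 \cap \p\Omega_2$ is the segment $\{x_1+x_2 = t_{1.5}\}\cap\XO$; since $\Omega_0,\Omega_1,\Omega_2$ are all nonempty one has $2a < t_{0.5} < t_{1.5} < 2a+2$, so $\Gamma$ has positive length and its relative interior $\Gamma^\circ$ lies in $\p\Omega_2$. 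The first step is to record what $C^1$-matching across $\Gamma$ imposes: since $u_1 = U(x_1+x_2)$ depends only on $x_1+x_2$, which is constant on $\Gamma$, continuity of $u$ and of $Du$ across $\Gamma^\circ$ forces $u_2 = U(t_{1.5})$ and $Du_2 = U'(t_{1.5})\,(1,1)$ on $\Gamma$. (These incorporate the Dirichlet part of \eqref{mixedBVP} together with the extra condition \eqref{extra}.)

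The heart of the argument is a rigidity step: $u_2$ solves $\Delta u_2 = 3$ on $\Int(\Omega_2)$ together with the \emph{overdetermined} Cauchy data $u_2 = U(t_{1.5})$, $Du_2 = U'(t_{1.5})(1,1)$ on the line segment $\Gamma$, and I claim this forces $u_2$ to depend only on $x_1+x_2$. Indeed, the explicit function
\[ q(x) := \tfrac{3}{4}(x_1+x_2)^2 + \beta(x_1+x_2) + \alpha, \qquad \beta := U'(t_{1.5}) - \tfrac{3}{2}t_{1.5}, \]
with $\alpha$ fixed by $q|_\Gamma = U(t_{1.5})$, satisfies $\Delta q = 3$ and matches the value and gradient of $u_2$ along $\Gamma$ (a one-line check). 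The difference $v := u_2 - q$ is harmonic on $\Int(\Omega_2)$, is $C^1$ up to $\Gamma^\circ$, and has $v = 0$, $Dv = 0$ on $\Gamma^\circ$; odd Schwarz reflection across the line carrying $\Gamma$ produces a harmonic function on a two-sided neighborhood of $\Gamma^\circ$ with vanishing Cauchy data on $\Gamma^\circ$, which must vanish identically by real-analyticity of harmonic functions. Unique continuation on the connected set $\Int(\Omega_2)$ then gives $v \equiv 0$, i.e.\ $u_2 \equiv q$ on $\Omega_2$.

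Finally I would contradict the Neumann condition on $\p\Omega_2 \cap \p\XO$ in \eqref{mixedBVP}. Because $t_{1.5} < 2a+2$, the corner $(a+1,a+1)$ lies in $\Omega_2$, so $\p\Omega_2$ contains a nondegenerate sub-segment $S$ of the edge $\{x_1 = a+1\}$, along which $\hat n = (1,0)$. On $S$,
\[ (Du_2 - x)\cdot\hat n = \p_1 q - x_1 = \tfrac{1}{2}x_1 + \tfrac{3}{2}x_2 + \beta = \tfrac{1}{2}(a+1) + \tfrac{3}{2}x_2 + \beta, \]
which is affine in $x_2$ with nonzero slope $\tfrac32$, hence not identically zero on the nondegenerate interval swept by $x_2$ along $S$ --- contradicting \eqref{mixedBVP}. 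Thus no convex nondecreasing $u \in C^1(\XO)$ satisfies \eqref{X_0}--\eqref{extra}, which is the first assertion; and since a maximizer of \eqref{profit} for $c(y)=|y|^2/2$, $d\mu = 1_\XO\,dx$ lies in $C^1(\XO)$ (as in \cite{RochetChone98,CarlierLachandRobert01}), such a $u$ cannot be that maximizer.

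The main obstacle is the rigidity step: one must (i) notice and verify that the ``flat'' quadratic $q$, whose gradient never leaves the diagonal, already matches the Cauchy data that $u_1 = U(x_1+x_2)$ bequeaths to $u_2$ on $\Gamma$, and (ii) legitimately invoke Cauchy uniqueness for $\Delta u_2 = 3$ with data on the line segment $\Gamma$, which only needs $u_2$ to be $C^1$ up to $\Gamma^\circ$ --- supplied by $u \in C^1(\XO)$ --- together with the reflection/analyticity fact for harmonic functions. The remaining bookkeeping (nonemptiness of $\Gamma$ and of $S$, and the closing affine computation) is routine.
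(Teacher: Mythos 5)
Your proof is correct, but it takes a genuinely different route from the paper's. You exploit the overdeterminacy of the Cauchy data on the flat interface $\Gamma=\{x_1+x_2=t_{1.5}\}$: the explicit one-dimensional quadratic $q$ matches both the value and the gradient that $u_1=U(x_1+x_2)$ bequeaths to $u_2$, so Schwarz reflection plus real-analyticity (equivalently, Holmgren/Cauchy--Kovalevskaya uniqueness for $\Delta u_2=3$) forces $u_2\equiv q$ on the connected set $\Int(\Omega_2)$, after which the Neumann condition visibly fails on the right edge. This is a valid rigidity argument, and it is in one respect stronger than what the paper proves: it nowhere uses convexity of $u$, so it shows the overdetermined system \eqref{mixedBVP}--\eqref{extra} with a straight interface admits \emph{no} $C^1$ solution at all. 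The paper instead argues much more elementarily: convexity gives $u_{x_1x_1}\ge 0$, hence the Poisson equation caps $u_{x_2x_2}\le 3$ in $\Int(\Omega_2)$; starting from $Du=(a,a)$ on the interface (using the explicit values $t_{1.5}=2a+\sqrt{6}/3$, $U'(t_{1.5})=a$) and integrating $u_{x_2x_2}\le 3$ up a vertical segment of length $1-\sqrt{2/3}$ to the top edge, one cannot reach the value $u_{x_2}=a+1$ demanded by the Neumann condition. That route avoids unique continuation, analyticity, and any regularity of $u_2$ up to $\Gamma$ beyond the $C^1$ matching at a single boundary point, at the price of invoking convexity; yours buys a stronger, convexity-free conclusion at the price of heavier (though standard) PDE machinery and of needing $u_2$ to be $C^1$ up to the relative interior of $\Gamma$, which your hypothesis $u\in C^1(\XO)$ does supply. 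Both proofs close the same way, by contradicting the Neumann condition on $\partial\Omega_2\cap\partial\XO$, and both defer to \cite{RochetChone98} and \cite{CarlierLachandRobert01} for the fact that the true maximizer is $C^1$.
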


\begin{proof} \cite{RochetChone98} showed that if $\bu$ convex and (coordinatewise) nondecreasing satisfies \eqref{X_0} -- \eqref{extra} (so 
$\Omega_1^\pm$ are empty),  then 
$$\Omega_1 =\{(x_1,x_2) \in \XO \mid t_{0.5} \le x_1 + x_2 \le t_{1.5} \}
$$ 
is bounded by $t_{0.5} = \frac{4a+\sqrt{4a^2+6}}{3}$ and $t_{1.5} = 2a + \frac{\sqrt{6}}{3}=t_{1.0}$.

Differentiating \eqref{RCODE} at
$x_1 + x_2 = t_{1.5}$ 
implies their solution to \eqref{mixedBVP} also satisfies
\begin{equation}\label{Poisson+1}
	Du(x) = (a, a)  \text{ on } \partial \Omega_1\cap\partial \Omega_2.\\
\end{equation}

Assume that Rochet and Chon\'e's solution $u \in C^1(\XO)$ exists and is convex. This convexity  
implies $u_{x_1x_1}\ge 0$ on  the interior $\Int(\Omega_2)$ of $\Omega_2$, hence the Poisson equation implies $u_{x_2 x_2} \le 3$ there.

Set $x' = (a,a+\frac{\sqrt{6}}3) \subset \p \XO \cap \p \Omega_1 \cap \p \Omega_2$. From \eqref{Poisson+1}, $\bu_{x_2}(x') = a$. 
Since $u\in C^1(\XO)$, there exists a point $x'' \in \Int(\Omega_2)$ with the same $x_2$ coordinate as $x'$ such that $\bu_{x_2}(x'') \le a+\frac1{10}$. 

Denote by $x'''=(x''_1,a+1) \in \p \XO$ the point on the top edge of the square having the same $x_1$ coordinate as $x''$.
Then the Neumann condition \eqref{mixedBVP} implies $\bu_{x_2}(x''') = a+1$. 

But
\begin{align*}
\bu_{x_2}(x''') - \bu_{x_2}(x'') &= \int_{a+\sqrt{2/3}}^{a+1} \bu_{x_2x_2}(x''_{1}, x_2) d x_2 
\\&\le 3[1 -\sqrt{2/3})] 
< \frac{3}5,
\end{align*} 
contradicting $\bu_{x_2}(x''') - \bu_{x_2}(x'') \ge (a+1) - (a+\frac1{10}) = \frac{9}{10}.$

This contradiction shows the $C^1$ differentiability of the maximizer up to the boundary is inconsistent with the convexity of \cite{RochetChone98}'s alleged solution,  in which $\Omega_1^\pm$ are empty.
\end{proof}

\bibliographystyle{apacite}
\bibliography{newbib230119.bib}
\end{document}